\newtheorem{thm}{Theorem}[section]
\newtheorem{lem}[thm]{Lemma}
\journal{~}
\begin{document}
\begin{spacing}{1.15}

\begin{frontmatter}

\title{\textbf{Perron-Frobenius theorem for dual tensors and its applications}}
\author{Changjiang Bu}
\author{Yue Chu}
\author{Qingying Zhang}
\author{Jiang Zhou}
\address{College of Mathematical Sciences, Harbin Engineering University, Harbin 150001, PR China}

\begin{abstract}
The Perron-Frobenius theorem of nonnegative matrices is a classical result on spectral theory of matrices, which has wide applications in many domains. In this paper, we give the Perron-Frobenius theorem for dual tensors, that is, a dual tensor with weakly irreducible nonnegative standard part has a positive dual eigenvalue with a positive dual eigenvector. We give an explicit formula for the dual part of the positive dual eigenvector by using generalized inverses of an $M$-matrix. By considering the natural correspondence between tensors (matrices) and hypergraphs (graphs), some basic properties on the positive dual eigenvalue and positive dual eigenvector of hypergraphs are obtained. As applications, we introduce dual centrality measures for vertices of graphs and hypergraphs.
By introducing a dual perturbation, vertices that are tied under eigenvector centrality can be effectively distinguished. In our numerical experiments, by perturbing specific structures, we successfully differentiated vertices in regular graphs and hypergraphs that were previously indistinguishable.
\end{abstract}

\begin{keyword}
Perron-Frobenius theorem, Dual tensor, Eigenvalue, Centrality\\
\end{keyword}

\end{frontmatter}


\section{Introduction}
Dual numbers were first introduced by Clifford in 1873 as a number system extending the real numbers through the introduction of a new element $\epsilon$ (the dual unit) with the property that $\epsilon\neq0$, ${\epsilon}^2=0$ and $\epsilon$ is commutative with real numbers \cite{clifford1871preliminary}. The field of real numbers and the set of dual numbers are denoted by $\mathbb{R}$ and $\mathbb{D}$, respectively. We can write a dual number $\lambda\in \mathbb{D}$ as $\lambda=\lambda_s+\lambda_d\epsilon$, where $\lambda_s,\lambda_d\in\mathbb{R}$ are referred to as the standard part and dual part, respectively. Similarly, a dual vector $x\in\mathbb{D}^n$ can be written as $x=x_s+x_d\epsilon$, where $x_s,x_d$ are real vectors of dimension $n$. A dual number $\lambda$ (resp. dual vector $x$) is called \textit{positive} if $\lambda_s>0$ (resp. $x_s$ is positive). Dual matrices are matrices whose entries are dual numbers. Dual numbers and dual matrices have found wide applications in areas such as robotics \cite{gu1987dual}, kinematics \cite{fischer2017dual,moreno2024automatic} and perturbation analysis \cite{jiang2024perturbation,qi2024eigenvalues}, etc.

The Perron-Frobenius theorem of nonnegative matrices \cite{meyer2023matrix} has wide applications in graph theory \cite{kirkland1996characteristic}, network science \cite{bihari2015eigenvector}, Google's PageRank search engine \cite{langville2005survey} and system theory \cite{aeyels2002extension} .etc. Qi and Cui \cite{qi2024dual} extended the Perron-Frobenius theory to dual number matrices with irreducible nonnegative standard parts, and use their theoretical results to study the perturbation analysis for the dual Markov chain and the perturbed Markov chain.

There exists a natural correspondence between nonnegative tensors (matrices) and hypergraphs (graphs).
The Perron-Frobenius theorem of nonnegative tensors has been established in \cite{chang2008perron,yang2010further,friedland2013perron}, which provide an important tool for the research on tensor eigenvalues of graphs and hypergraphs \cite{qi2017tensor,cooper2012spectra,xu2023two,liu2023generalization,chen2024spectra}.
The adjacency tensor of a connected hypergraph is a weakly irreducible nonnegative tensor, and it has a positive eigenvalue (spectral radius) with a positive eigenvector.

Centrality measures aim to identify the most important vertices in networks.
Over the years, researchers have proposed many centrality measures \cite{albert2000error,freeman1991centrality,alahakoon2011k,estrada2005subgraph,zhou2023estrada,lee2021betweenness,tudisco2018node}. Eigenvector centrality \cite{bonacich1972factoring} is an important application of the Perron-Frobenius theorem of nonnegative matrices, which uses a positive eigenvector of the graph's adjacency matrix as the centrality scores of vertices. In \cite{benson2019three}, Benson uses the positive eigenvector of the hypergraph's adjacency tensor as the centrality scores of vertices, and extended eigenvector centrality from graphs to hypergraphs.

Dual tensors are tensors whose entries are dual numbers. We will investigate the spectral properties of a dual tensor and its applications in the centrality of hypergraphs. The rest of the paper is organized as follows. In Section 2, some definitions, notations and auxiliary lemmas are introduced. In Section 3, we give the Perron-Frobenius theorem of dual tensors, which extends the work of Qi and Cui \cite{qi2024dual} to the tensor case. We show that a dual tensor with weakly irreducible nonnegative standard part has a positive dual eigenvalue with a positive dual eigenvector, and give an explicit formula for the dual part of the positive dual eigenvector by using generalized inverses of an $M$-matrix.

In Section 4, we derive some basic properties on the positive dual eigenvalue and positive dual eigenvector of uniform hypergraphs, and propose the dual centrality vector. In Section 5, we use dual eigenvalue and dual centrality vector to rank the centrality of vertices in graphs and hypergraphs.

\section{Preliminaries}

Let $R(A)$ denote the range of a matrix $A$. The \textit{$\{1\}$-inverse} of $A$ is a matrix $X$ such that $AXA=A$. For a square matrix $M$, the \textit{group inverse} of $M$, denoted by $M^\#$, is the unique matrix $X$ such that $MXM=M,~XMX=X$ and $MX=XM$. It is known that $M^\#$ exists if and only if $\mbox{\rm rank}(M)=\mbox{\rm rank}(M^2)$ (see \cite{ben2003generalized}).
\begin{lem}\label{lem2.1}
Let $M$ be a real square matrix such that $\mbox{\rm rank}(M)=\mbox{\rm rank}(M^2)$. If $x\in R(M)$ and $y\in R(M^\top)$, then
\begin{eqnarray*}
y^\top M^\#x=y^\top M^{(1)}x
\end{eqnarray*}
for any $\{1\}$-inverse $M^{(1)}$ of $M$.
\end{lem}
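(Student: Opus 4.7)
The plan is to use the range conditions to write $x$ and $y^{\top}$ as images under $M$, which will let the defining identity $M X M = M$ of any $\{1\}$-inverse absorb the inverse entirely.

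First I would invoke the hypotheses on the ranges: since $x \in R(M)$, there exists $u \in \mathbb{R}^n$ with $x = Mu$, and since $y \in R(M^{\top})$, there exists $v \in \mathbb{R}^n$ with $y = M^{\top} v$, equivalently $y^{\top} = v^{\top} M$. Substituting these into the right-hand side gives
\begin{equation*}
y^{\top} M^{(1)} x \;=\; v^{\top} M\, M^{(1)} M\, u \;=\; v^{\top} M u,
\end{equation*}
where the last equality uses only the defining property $M M^{(1)} M = M$ of a $\{1\}$-inverse. Crucially, the particular choice of $\{1\}$-inverse has disappeared, so the value $v^{\top} M u$ is independent of $M^{(1)}$.

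Next I would observe that, under the hypothesis $\mathrm{rank}(M) = \mathrm{rank}(M^{2})$, the group inverse $M^{\#}$ exists (as recalled just before the lemma) and is itself a particular $\{1\}$-inverse of $M$, since its defining equations include $M M^{\#} M = M$. Applying the same substitution yields
\begin{equation*}
y^{\top} M^{\#} x \;=\; v^{\top} M\, M^{\#} M\, u \;=\; v^{\top} M u.
\end{equation*}
Comparing the two displays gives $y^{\top} M^{\#} x = y^{\top} M^{(1)} x$, which is the claim.

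There is no real obstacle here: the whole content is that $y^{\top} M^{(1)} x$ depends only on $M$ (not on the choice of generalized inverse) whenever $x$ and $y$ lie in the appropriate ranges, and the group inverse, when it exists, is a valid representative. I would perhaps add one sentence remarking that the result will be used later to evaluate expressions involving $M^{\#}$ by replacing it with any conveniently chosen $\{1\}$-inverse, which is the practical reason the lemma is stated.
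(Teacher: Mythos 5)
Your proof is correct and follows essentially the same route as the paper: both write $x=Mu$, $y=M^\top v$ and use $MM^{(1)}M=M$ to reduce both sides to $v^\top Mu$, noting that $M^\#$ is itself a $\{1\}$-inverse. No issues.
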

\begin{proof}
There are vectors $z_1,z_2$ such that $y=M^\top z_1$ and $x=Mz_2$. Hence
\begin{eqnarray*}
y^\top M^\#x=y^\top M^{(1)}x=z_1^\top Mz_2
\end{eqnarray*}
for any $\{1\}$-inverse $M^{(1)}$ of $M$.
\end{proof}
A real square matrix $A$ is called a \textit{Z-matrix} if $(A)_{ij}\leq0$ for any $i\neq j$. If a Z-matrix $A$ can be written as $A=sI-B$ such that $B$ is a nonnegative matrix and $s\geq\rho(B)$ ($\rho(B)$ is the spectral radius of $B$), then $A$ is called an \textit{M-matrix}.
\begin{lem}\label{lem2.2} \cite{berman1994nonnegative}
Let $A$ be an irreducible Z-matrix. If $Ax=0$ for a positive real vector $x$, then $A$ is an irreducible singular M-matrix.
\end{lem}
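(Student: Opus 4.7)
The plan is to show that $A$ can be written as $sI-B$ with $B$ an irreducible nonnegative matrix whose spectral radius is exactly $s$, using the hypothesis $Ax=0$ with $x>0$ together with Perron-Frobenius for nonnegative matrices.

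First I would pick a scalar $s\geq\max_i A_{ii}$ and set $B:=sI-A$. Because $A$ is a Z-matrix, the off-diagonal entries of $B$ satisfy $B_{ij}=-A_{ij}\geq0$ for $i\neq j$, and the choice of $s$ forces $B_{ii}\geq 0$ as well, so $B$ is entrywise nonnegative. Moreover, $B$ and $A$ have exactly the same off-diagonal zero/nonzero pattern, so the digraph associated with $B$ coincides with that of $A$ off the diagonal; hence $B$ is irreducible.

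Next I would translate the hypothesis $Ax=0$ into $Bx=sx$, which exhibits $s$ as an eigenvalue of $B$ with positive eigenvector $x$. By the Perron-Frobenius theorem for irreducible nonnegative matrices, the spectral radius $\rho(B)$ is a simple eigenvalue and is the \emph{only} eigenvalue admitting a positive eigenvector; therefore $s=\rho(B)$. This gives $A=sI-B$ with $B\geq 0$ and $s=\rho(B)\geq\rho(B)$, which is precisely the defining condition of an $M$-matrix. Irreducibility of $A$ is given, and singularity follows immediately from $Ax=0$ with $x\neq 0$ (or equivalently from $s=\rho(B)$ being an eigenvalue of $B$, making $\det(sI-B)=0$).

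I do not expect a serious obstacle here: the statement is essentially a repackaging of Perron-Frobenius, and the only delicate point is the uniqueness-of-sign part of Perron-Frobenius, namely that a positive eigenvector of an irreducible nonnegative matrix must correspond to the spectral radius. This is the reason the irreducibility hypothesis on $A$ cannot be dropped, and it is the step I would state most carefully when citing the underlying result.
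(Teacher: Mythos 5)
Your argument is correct: the reduction $B:=sI-A$ with $s\geq\max_i A_{ii}$, the observation that $Bx=sx$ with $x>0$, and the Perron--Frobenius fact that a positive eigenvector of an irreducible nonnegative matrix can only belong to $\rho(B)$ together give $s=\rho(B)$, hence $A=\rho(B)I-B$ is a singular $M$-matrix. The paper does not prove this lemma at all --- it is quoted from Berman and Plemmons --- and your proof is essentially the standard one from that reference, so there is nothing to reconcile.
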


\begin{lem}\label{lem2.3} \cite{berman1994nonnegative}
Let $A$ be an irreducible singular M-matrix of order $n$. Then ${\rm rank}(A)=n-1$ and every $k\times k$ ($k<n$) principal submatrix of $A$ is a nonsingular M-matrix.
\end{lem}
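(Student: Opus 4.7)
The plan is to derive both conclusions from standard Perron--Frobenius facts applied to the nonnegative part $B$ of the representation $A=sI-B$. Because $A$ is an M-matrix we can write $A=sI-B$ with $B\geq 0$ and $s\geq\rho(B)$; since $A$ is singular, $s$ must in fact be an eigenvalue of $B$, so $s=\rho(B)$. Irreducibility of $A$ carries over to $B$ because the off-diagonal zero pattern is identical.

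For the rank statement, I would invoke the Perron--Frobenius theorem for irreducible nonnegative matrices: $\rho(B)$ is a simple eigenvalue of $B$ (algebraic multiplicity one), hence $0$ is a simple eigenvalue of $A$. Simplicity of the eigenvalue $0$ forces $\dim\ker(A)=1$, and singularity guarantees that the kernel is at least one-dimensional. Combining these gives $\mathrm{rank}(A)=n-1$.

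For the second part, let $A_k$ be a $k\times k$ principal submatrix with $k<n$, and let $B_k$ be the corresponding principal submatrix of $B$, so that $A_k=sI-B_k$ and $B_k\geq 0$. The key step is the standard Perron--Frobenius inequality that any proper principal submatrix of an irreducible nonnegative matrix has strictly smaller spectral radius: $\rho(B_k)<\rho(B)=s$. Consequently $A_k$ fits the definition of an M-matrix with strict inequality $s>\rho(B_k)$, which makes $A_k$ nonsingular (all its eigenvalues lie in the right half-plane away from zero).

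The only delicate ingredient is the strict inequality $\rho(B_k)<\rho(B)$ for proper principal submatrices of an irreducible nonnegative matrix; this is where irreducibility of $A$ is essential (in the reducible case, equality can occur). Once that is in hand the rest is bookkeeping with the definition of an M-matrix and the dimension count for the kernel of $A$.
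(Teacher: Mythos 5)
Your argument is correct. Note that the paper itself gives no proof of this lemma --- it is quoted directly from the Berman--Plemmons reference --- so there is no internal proof to compare against; your derivation (writing $A=sI-B$, forcing $s=\rho(B)$ by singularity, using simplicity of the Perron root of the irreducible nonnegative matrix $B$ to get $\dim\ker(A)=1$, and using the strict monotonicity $\rho(B_k)<\rho(B)$ for proper principal submatrices to make each $A_k=sI-B_k$ a nonsingular M-matrix) is exactly the standard textbook route, and every step, including the one delicate ingredient you flag, is a genuine Perron--Frobenius fact that holds under the stated irreducibility hypothesis.
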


An order $m$ dimension $n$ complex tensor $\mathcal{A}=(a_{i_{1}i_{2}...i_{m}})$ is a multidimensional array with $n^{m}$ entries, where $i_{j}=1,2,...,n$, $j=1,2,...,m$. Let $\mathbb{C}$ be the complex field and let $\mathbb{C}^{n}$ be the set of $n$-dimensional complex vectors. For a vector $x=(x_{1},\ldots,x_{n})^\top\in\mathbb{C}^{n}$, $\mathcal{A}x^{m-1}$ is a vector in $\mathbb{C}^{n}$ whose $i$-th component is $\sum_{i_{2},\ldots,i_{m}=1}^{n}a_{ii_{2}...i_{m}}x_{i_{2}}\cdots x_{i_{m}}$. If there exist $\lambda\in\mathbb{C}$ and nonzero vector $x=(x_{1},\ldots,x_{n})^\top\in\mathbb{C}^{n}$ such that
\begin{eqnarray*}
\mathcal{A}x^{m-1}=\lambda(x_1^{m-1},...,x_n^{m-1})^\top,
\end{eqnarray*}
then $\lambda$ is an \textit{eigenvalue} of $\mathcal{A}$ with an associated eigenvector $x$ (see \cite{qi2005eigenvalues,lim2005singular}).

For a tensor $\mathcal{A}=(a_{i_1\cdots i_m })\in\mathbb{C}^{n\times\cdots\times n}$, we associate with $\mathcal{A}$ a digraph $\Gamma_{\mathcal{A}}$ as follows. The vertex set of $\Gamma_{\mathcal{A}}$ is $V(\mathcal{A})=\left\{1,\ldots,n\right\}$, the arc set of $\Gamma_{\mathcal{A}}$ is $E(\mathcal{A }) = \{(i,j)|a_{ii_2\cdots i_m}\neq0,j\in\{i_2,\ldots,i_m\}\neq\{i,\ldots,i\}\}$. $\mathcal{A}$ is said to be \textit{weakly irreducible} if $\Gamma_{\mathcal{A}}$ is strongly connected
\begin{lem}\label{lem2.4} \cite{friedland2013perron}
Let $\mathcal{A}$ be a weakly irreducible nonnegative tensor. Then there exists positive real number $\lambda$ and positive real vector $x$ such that $\mathcal{A}x^{m-1}=\lambda x^{[m-1]}$ and $\lambda$ equals to the spectral radius of $\mathcal{A}$.
\end{lem}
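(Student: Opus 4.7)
The plan is to obtain the Perron eigenpair by a nonlinear fixed-point argument and then identify the dominant eigenvalue with the spectral radius through a Collatz--Wielandt type comparison. Consider the homogeneous map
\[
F:\mathbb{R}^n_{\ge 0}\setminus\{0\}\to\mathbb{R}^n_{\ge 0},\qquad F(x)_i = \Big(\sum_{i_2,\ldots,i_m=1}^n a_{ii_2\cdots i_m}\,x_{i_2}\cdots x_{i_m}\Big)^{1/(m-1)},
\]
which is continuous, positively homogeneous of degree $1$, and monotone on the nonnegative cone. Regularizing $\mathcal{A}$ by $\mathcal{A}_\varepsilon = \mathcal{A} + \varepsilon\mathcal{J}$, where $\mathcal{J}$ is the all-ones tensor, ensures that the corresponding $F_\varepsilon$ sends the standard simplex $\Delta = \{x\ge 0:\sum_i x_i = 1\}$ into its relative interior; Brouwer's fixed-point theorem applied to $x\mapsto F_\varepsilon(x)/\|F_\varepsilon(x)\|_1$ produces $x^{(\varepsilon)}\in\Delta$ and $\lambda^{(\varepsilon)}>0$ with $\mathcal{A}_\varepsilon(x^{(\varepsilon)})^{m-1} = \lambda^{(\varepsilon)}(x^{(\varepsilon)})^{[m-1]}$. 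Taking $\varepsilon\to 0$ along a convergent subsequence yields $\lambda\ge 0$ and $x\in\Delta$ with $\mathcal{A}x^{m-1} = \lambda x^{[m-1]}$.

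The main obstacle, and the step where weak irreducibility enters, is promoting $x\ge 0$ to $x>0$. Let $T = \{i : x_i > 0\}$ and suppose for contradiction that $T\ne \{1,\ldots,n\}$. For $i\in T$, the eigenrelation $\lambda x_i^{m-1} = (\mathcal{A}x^{m-1})_i$ forces the existence of a nonzero $a_{ii_2\cdots i_m}$ with all $i_k\in T$, while for $i\notin T$ every nonzero $a_{ii_2\cdots i_m}$ must have some index $i_k$ with $x_{i_k}=0$. I would exploit strong connectedness of $\Gamma_\mathcal{A}$ by passing to the auxiliary nonnegative matrix $B_\mathcal{A}$ defined by $(B_\mathcal{A})_{ij} = \sum_{(i_2,\ldots,i_m):\,j\in\{i_2,\ldots,i_m\}} a_{ii_2\cdots i_m}$, which shares the zero pattern of the adjacency matrix of $\Gamma_\mathcal{A}$ and is therefore irreducible in the classical sense. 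Tracking the propagation of positive coordinates under iterates of $F$, and using irreducibility of $B_\mathcal{A}$ to reach every vertex from every other, forces $T=\{1,\ldots,n\}$, so that $x>0$ and consequently $\lambda = (\mathcal{A}x^{m-1})_1/x_1^{m-1}>0$.

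Finally, to show $\lambda = \rho(\mathcal{A})$, I would establish a Collatz--Wielandt inequality. For any eigenpair $(\mu,y)$ of $\mathcal{A}$ with $y\ne 0$, the entrywise triangle inequality yields $|\mu|\,|y|^{[m-1]}\le\mathcal{A}|y|^{m-1}$, i.e.\ $F(|y|)\ge |\mu|^{1/(m-1)}|y|$. Setting $t=\min_{i:\,y_i\ne 0} x_i/|y_i|>0$, so that $t|y|\le x$ coordinatewise, and applying the monotonicity and homogeneity of $F$ to $F(x)=\lambda^{1/(m-1)}x$ gives $t|\mu|^{1/(m-1)}|y|\le\lambda^{1/(m-1)}x$; maximality of $t$ then forces $|\mu|\le\lambda$, so $\lambda=\rho(\mathcal{A})$. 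The hardest part remains the positivity step in the middle paragraph: unlike the matrix case, a single application of the eigenrelation does not immediately force forward-closedness of the support of $x$, because a nonzero hyperedge may straddle $T$ and its complement, so either iteration of $F$ or linearization through $B_\mathcal{A}$ is needed to conclude.
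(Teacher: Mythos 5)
The paper itself gives no proof of this lemma; it is cited from Friedland, Gaubert and Han, whose argument runs through nonlinear Perron--Frobenius theory for monotone, degree-one homogeneous maps on a cone (contraction/boundedness in Hilbert's projective metric under the strong-connectivity hypothesis), not through the regularization-plus-Brouwer route you take. Your overall architecture --- perturb by $\varepsilon\mathcal{J}$, extract a normalized fixed point of $F_\varepsilon$ on the simplex, pass to a subsequential limit, and finish with a Collatz--Wielandt comparison --- is the classical Chang--Pearson--Zhang scheme, and its first and last steps are sound as sketched.

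The genuine gap is exactly where you suspect it: positivity of the limit vector. The repair you propose cannot work, because the statement it would establish --- that a weakly irreducible nonnegative tensor admits no nonnegative eigenvector with a zero coordinate --- is false. Take $m=3$, $n=2$, with $a_{111}=a_{122}=a_{212}=1$ and all other entries zero. Then $\Gamma_{\mathcal{A}}$ contains the arcs $1\to 2$ and $2\to 1$, so $\mathcal{A}$ is weakly irreducible and your $B_{\mathcal{A}}$ is irreducible; yet $x=(1,0)^\top$ satisfies $\mathcal{A}x^{2}=1\cdot x^{[2]}$, a boundary eigenpair whose eigenvalue $1$ is strictly below $\rho(\mathcal{A})\geq 1.46$ (the positive eigenpair solves $\lambda^3=\lambda^2+1$). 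So no propagation of supports through $B_{\mathcal{A}}$, and no iteration of $F$ (which fixes the ray of an eigenvector and hence cannot enlarge its support), can exclude zero coordinates for a general nonnegative eigenpair; one must use that the limiting eigenvalue is $\rho(\mathcal{A})$ itself. But in your sketch $\lambda=\rho(\mathcal{A})$ is only derived afterwards via Collatz--Wielandt, and that step already presupposes $x>0$ (your $t=\min_{i:y_i\neq 0}x_i/|y_i|$ is positive only if ${\rm supp}(y)\subseteq{\rm supp}(x)$), so the argument as written is circular. Two standard repairs: (i) first show $\lambda^{(\varepsilon)}=\rho(\mathcal{A}_\varepsilon)\geq\rho(\mathcal{A})$ by monotonicity of the spectral radius in the entries, so the limit eigenvalue equals $\rho(\mathcal{A})$, and then prove separately that for a weakly irreducible tensor a nonnegative eigenvector associated with $\rho(\mathcal{A})$ must be positive (this rests on the nontrivial lemma that proper principal subtensors have strictly smaller spectral radius); or (ii) abandon the limiting construction and invoke the nonlinear Perron--Frobenius theorem for monotone homogeneous maps, as the cited source does. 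Either way a substantive ingredient is still missing.
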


\section{Eigenvalues of dual tensors}
Let $\mathcal{A}$ be a dual tensor of order $m$ and dimension $n$. Then $\mathcal{A}$ can be written as $\mathcal{A}=\mathcal{A}_s+\mathcal{A}_d\epsilon$, where $\mathcal{A}_s$ and $\mathcal{A}_d$ are real tensors representing the standard and dual parts, respectively. Let $\mathcal{I}$ denote a diagonal tensor such that every diagonal entry is $1$.

For a dual vector $x=(x_{1},\ldots,x_{n})^\top\in\mathbb{D}^{n}$, let $x^{[m-1]}=(x_1^{m-1},...,x_n^{m-1})^\top$, and let $\mathcal{A}x^{m-1}$ denote a dual vector in $\mathbb{D}^{n}$ whose $i$-th component is
\begin{eqnarray*}
(\mathcal{A}x^{m-1})_i=\sum_{i_{2},...,i_{m}=1}^{n}a_{ii_{2}...i_{m}}x_{i_{2}}\cdots x_{i_{m}}.
\end{eqnarray*}
If there exists dual number $\lambda$ and dual vector $x=x_s+x_d\epsilon$ ($x_s\neq0$) such that $\mathcal{A}x^{m-1}=\lambda x^{[m-1]}$, then we say that $\lambda$ is an \textit{eigenvalue} of $\mathcal{A}$ with an associated eigenvector $x$.
\begin{thm}
Let $\mathcal{A}=\mathcal{A}_s+\mathcal{A}_d\epsilon$ be a dual tensor of order $m$ and dimension $n$. For dual number $\lambda=\lambda_s+\lambda_d\epsilon$ and dual vector $x=x_s+x_d\epsilon$, we have $\mathcal{A}x^{m-1}=\lambda x^{[m-1]}$ if and only if $\mathcal{A}_sx_s^{m-1}=\lambda_sx_s^{[m-1]}$ and
\begin{eqnarray*}
Mx_d=(\mathcal{A}_d-\lambda_d\mathcal{I})x_s^{m-1},
\end{eqnarray*}
where $M=(m-1)\lambda_s({\rm diag}((x_s)_1,\ldots,(x_s)_n))^{m-2}-\sum_{k=2}^mA^{(k)}$, $A^{(k)}$ is an $n\times n$ matrix whose $(i,j)$-entry is ($1\leq i\leq j\leq n$)
\begin{eqnarray*}
(A^{(k)})_{ij}=\sum_{i_2,\ldots,i_{k-1},i_{k+1},\ldots,i_m=1}^n(\mathcal{A}_s)_{ii_2\cdots i_{k-1}ji_{k+1}\cdots i_m}(x_s)_{i_2}\cdots (x_s)_{i_{k-1}}(x_s)_{i_{k+1}}\cdots(x_s)_{i_m}.
\end{eqnarray*}
Moreover, if $\mathcal{A}_s$ is symmetry, then $M=(m-1)(\lambda_s({\rm diag}((x_s)_1,\ldots,(x_s)_n))^{m-2}-A)$, where $A$ is an $n\times n$ symmetry matrix with entries
\begin{eqnarray*}
(A)_{ij}=\sum_{i_3,i_4,\ldots,i_m=1}^n(\mathcal{A}_s)_{iji_3i_4\cdots i_m}(x_s)_{i_3}(x_s)_{i_4}\cdots(x_s)_{i_m}~(1\leq i\leq j\leq n).
\end{eqnarray*}
\end{thm}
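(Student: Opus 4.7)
The plan is to prove both directions simultaneously by expanding each side of $\mathcal{A}x^{m-1}=\lambda x^{[m-1]}$ into its standard and dual parts and equating them componentwise, relying only on the relations $\epsilon^2=0$ and commutativity of $\epsilon$ with reals. Concretely, I would write $x_{i_k}=(x_s)_{i_k}+(x_d)_{i_k}\epsilon$ and $a_{i i_2 \cdots i_m}=(\mathcal{A}_s)_{i i_2 \cdots i_m}+(\mathcal{A}_d)_{i i_2 \cdots i_m}\epsilon$, multiply out, and drop every term carrying $\epsilon^2$. Because each summand in $(\mathcal{A}x^{m-1})_i$ is a product of one tensor entry and $m-1$ vector components, after truncation it contains either (a) all standard factors, (b) exactly one dual factor from the tensor, or (c) exactly one dual factor from some vector coordinate $x_{i_k}$.

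Collecting the standard terms in $(\mathcal{A}x^{m-1})_i$ yields $(\mathcal{A}_s x_s^{m-1})_i$, and collecting the standard terms in $(\lambda x^{[m-1]})_i$ yields $\lambda_s (x_s)_i^{m-1}$, since by the binomial expansion $(x_s+x_d\epsilon)_i^{m-1}=(x_s)_i^{m-1}+(m-1)(x_s)_i^{m-2}(x_d)_i\epsilon$. Equating the standard parts gives the first required identity $\mathcal{A}_sx_s^{m-1}=\lambda_s x_s^{[m-1]}$. For the dual parts, the $\mathcal{A}_d$-contribution produces $\mathcal{A}_d x_s^{m-1}$; for each position $k\in\{2,\ldots,m\}$, the contribution in which $x_{i_k}$ supplies the dual factor $(x_d)_{i_k}$ becomes, after renaming $i_k\mapsto j$, exactly $(A^{(k)}x_d)_i$ with $A^{(k)}$ as defined. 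On the right-hand side, the dual part is $\lambda_d(x_s)_i^{m-1}+(m-1)\lambda_s(x_s)_i^{m-2}(x_d)_i$, which is the $i$-th entry of $\lambda_d\mathcal{I}x_s^{m-1}+(m-1)\lambda_s\bigl(\operatorname{diag}((x_s)_1,\ldots,(x_s)_n)\bigr)^{m-2}x_d$. Rearranging the resulting identity gives $Mx_d=(\mathcal{A}_d-\lambda_d\mathcal{I})x_s^{m-1}$.

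For the symmetric addendum, I would exploit the fact that symmetry of $\mathcal{A}_s$ lets us permute the indices $i_2,\ldots,i_m$ freely inside the sum defining $(A^{(k)})_{ij}$. Moving the distinguished index $j$ (originally sitting in slot $k$) to slot $2$ and relabeling the remaining dummy indices shows $A^{(k)}=A$ for every $k\in\{2,\ldots,m\}$, so $\sum_{k=2}^m A^{(k)}=(m-1)A$, and $A$ is symmetric for the same index-permutation reason. Substituting this into the formula for $M$ gives the stated simplification.

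The only real obstacle is bookkeeping: one must be careful that the summation in case (c) splits cleanly into $m-1$ matrix--vector products rather than producing cross terms, which is where the restriction $\epsilon^2=0$ is essential. Once this combinatorial decomposition is recognized, the rest is a straightforward identification of coefficients.
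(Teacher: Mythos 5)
Your proposal is correct and follows essentially the same route as the paper's proof: both expand $\mathcal{A}x^{m-1}$ and $\lambda x^{[m-1]}$ into standard and dual parts using $\epsilon^2=0$, identify the dual contribution of the vector coordinates with $\bigl(\sum_{k=2}^m A^{(k)}\bigr)x_d$, and in the symmetric case observe $A^{(2)}=\cdots=A^{(m)}=A$. Your write-up is somewhat more explicit about the combinatorial case split (a)/(b)/(c), but the underlying argument is identical.
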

\begin{proof}
By $\lambda=\lambda_s+\lambda_d\epsilon$ and $x=x_s+x_d\epsilon$ we have
\begin{eqnarray*}
\lambda x^{[m-1]}&=&\lambda x_s^{[m-1]}+\lambda(m-1)({\rm diag}((x_s)_1,\ldots,(x_s)_n))^{m-2}x_d\epsilon\\
&=&\lambda_sx_s^{[m-1]}+\epsilon\lambda_dx_s^{[m-1]}+\lambda_s(m-1)({\rm diag}((x_s)_1,\ldots,(x_s)_n))^{m-2}x_d\epsilon\\
&=&\lambda_sx_s^{[m-1]}+\epsilon(\lambda_dx_s^{[m-1]}+\lambda_s(m-1)({\rm diag}((x_s)_1,\ldots,(x_s)_n))^{m-2}x_d),
\end{eqnarray*}
where $(x_s)_i$ is the $i$-th component of $x_s$.

By $\mathcal{A}=\mathcal{A}_s+\mathcal{A}_d\epsilon$ we get
\begin{eqnarray*}
\mathcal{A}x^{m-1}=\mathcal{A}_sx^{m-1}+\epsilon\mathcal{A}_dx^{m-1}=\mathcal{A}_sx^{m-1}+\epsilon\mathcal{A}_dx_s^{m-1}.
\end{eqnarray*}
By computation, we have
\begin{eqnarray*}
\mathcal{A}_sx^{m-1}&=&\mathcal{A}_sx_s^{m-1}+\left(\sum_{k=2}^mA^{(k)}\right)x_d\epsilon,\\
\mathcal{A}x^{m-1}&=&\mathcal{A}_sx_s^{m-1}+\epsilon\left(\left(\sum_{k=2}^mA^{(k)}\right)x_d+\mathcal{A}_dx_s^{m-1}\right),
\end{eqnarray*}
where $A^{(k)}$ is an $n\times n$ matrix whose $(i,j)$-entry is ($1\leq i\leq j\leq n$)
\begin{eqnarray*}
(A^{(k)})_{ij}=\sum_{i_2,\ldots,i_{k-1},i_{k+1},\ldots,i_m=1}^n(\mathcal{A}_s)_{ii_2\cdots i_{k-1}ji_{k+1}\cdots i_m}(x_s)_{i_2}\cdots (x_s)_{i_{k-1}}(x_s)_{i_{k+1}}\cdots(x_s)_{i_m}.
\end{eqnarray*}
Hence $\mathcal{A}x^{m-1}=\lambda x^{[m-1]}$ if and only if
\begin{eqnarray*}
\mathcal{A}_sx_s^{m-1}+\epsilon\left(\left(\sum_{k=2}^mA^{(k)}\right)x_d+\mathcal{A}_dx_s^{m-1}\right)=\lambda_sx_s^{[m-1]}\\
+\epsilon(\lambda_dx_s^{[m-1]}+\lambda_s(m-1)({\rm diag}((x_s)_1,\ldots,(x_s)_n))^{m-2}x_d),
\end{eqnarray*}
that is, $\mathcal{A}_sx_s^{m-1}=\lambda_sx_s^{[m-1]}$ and
\begin{eqnarray*}
\left((m-1)\lambda_s({\rm diag}((x_s)_1,\ldots,(x_s)_n))^{m-2}-\sum_{k=2}^mA^{(k)}\right)x_d=(\mathcal{A}_d-\lambda_d\mathcal{I})x_s^{m-1}.
\end{eqnarray*}
Moreover, if $\mathcal{A}_s$ is symmetry, then $A^{(2)}=\cdots=A^{(m)}=A$, where $A$ is an $n\times n$ symmetry matrix with entries
\begin{eqnarray*}
(A)_{ij}=\sum_{i_3,i_4,\ldots,i_m=1}^n(\mathcal{A}_s)_{iji_3i_4\cdots i_m}(x_s)_{i_3}(x_s)_{i_4}\cdots(x_s)_{i_m}~(1\leq i\leq j\leq n).
\end{eqnarray*}
\end{proof}
Let ${\rm Ker}(M)=\{x:Mx=0,x\in\mathbb{R}^n\}$ denote the kernel of an $m\times n$ real matrix $M$. We now give the following spectral properties for a dual tensor with weakly irreducible nonnegative standard part.
\begin{thm}
Let $\mathcal{A}=\mathcal{A}_s+\mathcal{A}_d\epsilon$ be a dual tensor of order $m$ and dimension $n$, where $\mathcal{A}_s$ is a weakly irreducible nonnegative tensor. Then $\mathcal{A}$ has a positive dual number eigenvalue $\lambda=\lambda_s+\lambda_d\epsilon$ with a positive dual eigenvector $x=x_s+x_d\epsilon$. Furthermore, $\lambda_s$, $\lambda_d$, $x_s$ and $x_d$ have the following properties.\\
(1) $\lambda_s$ is the spectral radius of $\mathcal{A}_s$ and $\mathcal{A}_sx_s^{m-1}=\lambda_sx_s^{[m-1]}$.\\
(2) The matrix $M$ defined in Theorem 3.1 is an irreducible singular $M$-matrix such that
\begin{eqnarray*}
{\rm Ker}(M)=\{cx_s:c\in\mathbb{R}\},~\lambda_d=\frac{y^\top(\mathcal{A}_dx_s^{m-1})}{y^\top x_s^{[m-1]}},
\end{eqnarray*}
where $y$ is the unique positive unit vector $y$ such that $y^\top M=0$. Moreover, if $\mathcal{A}_s$ is symmetry, then $y=(x_s^\top x_s)^{-1}x_s$.\\
(3) Let $M^{(1)}$ be a $\{1\}$-inverse of $M$. Then
\begin{eqnarray*}
x_d\in S=\{M^{(1)}(\mathcal{A}_d-\lambda_d\mathcal{I})x_s^{m-1}+cx_s:c\in\mathbb{R}\}
\end{eqnarray*}
and for any $z\in S$, $x_s+z\epsilon$ is an eigenvector of the eigenvalue $\lambda$ of $\mathcal{A}$.\\
(4) If $x_s^\top x_d=0$, then $x_d=M^\#(\mathcal{A}_d-\lambda_d\mathcal{I})x_s^{m-1}$.
\end{thm}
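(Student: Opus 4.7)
The strategy is to decouple the dual eigenvalue equation $\mathcal{A}x^{m-1}=\lambda x^{[m-1]}$ into its standard and dual parts via Theorem~3.1, and then analyze the resulting coefficient matrix $M$ through the $M$-matrix machinery of Section~2. First I would apply Lemma~\ref{lem2.4} to the weakly irreducible nonnegative tensor $\mathcal{A}_s$ to obtain the Perron value $\lambda_s=\rho(\mathcal{A}_s)>0$ and a positive Perron vector $x_s$ with $\mathcal{A}_s x_s^{m-1}=\lambda_s x_s^{[m-1]}$, which establishes (1). It then remains to find $\lambda_d$ and $x_d$ solving $Mx_d=(\mathcal{A}_d-\lambda_d\mathcal{I})x_s^{m-1}$; all further work reduces to this linear system.

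\textbf{Part (2).} The crux is proving that $M$ is an irreducible singular $M$-matrix. I would verify three facts: (i) $M$ is a Z-matrix, because its off-diagonal $(i,j)$-entry equals $-\sum_{k=2}^{m}(A^{(k)})_{ij}\le 0$ by nonnegativity of $\mathcal{A}_s$ and positivity of $x_s$; (ii) $Mx_s=0$, obtained by reindexing the sum so that $(A^{(k)}x_s)_i=(\mathcal{A}_sx_s^{m-1})_i=\lambda_s(x_s)_i^{m-1}$ for each $k$, whence the sum over the $m-1$ values of $k$ exactly cancels the diagonal term $(m-1)\lambda_s x_s^{[m-1]}$; (iii) $M$ is irreducible because its off-diagonal support coincides with the arc set of $\Gamma_{\mathcal{A}_s}$, which is strongly connected by hypothesis. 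Lemma~\ref{lem2.2} then yields that $M$ is an irreducible singular $M$-matrix, and Lemma~\ref{lem2.3} gives $\mathrm{rank}(M)=n-1$, so $\mathrm{Ker}(M)=\{cx_s:c\in\mathbb{R}\}$. Applying the identical argument to $M^\top$ produces a unique positive unit left null vector $y$; the consistency condition $(\mathcal{A}_d-\lambda_d\mathcal{I})x_s^{m-1}\in R(M)=\mathrm{Ker}(M^\top)^\perp$ then forces the stated ratio formula for $\lambda_d$. When $\mathcal{A}_s$ is symmetric, the ``moreover'' clause of Theorem~3.1 makes $M$ symmetric, so $y$ is proportional to $x_s$, and the normalization $y^\top x_s=1$ yields $y=(x_s^\top x_s)^{-1}x_s$.

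\textbf{Parts (3) and (4).} Since the linear system $Mx_d=b$ is consistent, its solution set is $M^{(1)}b+\mathrm{Ker}(M)=\{M^{(1)}b+cx_s:c\in\mathbb{R}\}$ for any $\{1\}$-inverse $M^{(1)}$, which is $S$; any $z\in S$ produces a positive dual vector $x_s+z\epsilon$ (positive since $x_s>0$) satisfying the eigenvalue equation, which proves (3). For (4), simplicity of the zero eigenvalue of the irreducible singular $M$-matrix gives $\mathrm{rank}(M)=\mathrm{rank}(M^2)$, so the group inverse $M^\#$ exists and furnishes a distinguished particular solution $M^\#b$. Writing $x_d=M^\#b+cx_s$, the orthogonality constraint $x_s^\top x_d=0$ pins down $c$; concluding $c=0$, i.e.\ $x_d=M^\#b$, follows from $x_s^\top M^\#b=0$, which I would obtain using Lemma~\ref{lem2.1} together with $M^\#b\in R(M)$ and the identity $R(M)\perp\mathrm{Ker}(M)$ in the symmetric case.

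\textbf{Main obstacle.} The most delicate step is verifying the orthogonality $x_s^\top M^\#b=0$ needed in (4). In the symmetric setting (the principal case for the hypergraph adjacency tensors treated in Section~4) this is immediate from $R(M)\perp\mathrm{Ker}(M)$, but in the general non-symmetric case it requires a careful application of Lemma~\ref{lem2.1} to compare the group inverse with other $\{1\}$-inverses on the range of $M^\top$.
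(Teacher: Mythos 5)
Your argument for the existence statement and for parts (1)--(3) is essentially the paper's proof: decouple via Theorem 3.1, invoke Lemma \ref{lem2.4} for $(\lambda_s,x_s)$, check that $M$ is an irreducible Z-matrix annihilating $x_s$ so that Lemmas \ref{lem2.2} and \ref{lem2.3} give the irreducible singular $M$-matrix structure and ${\rm rank}(M)=n-1$, then read off the kernel, the consistency condition determining $\lambda_d$, and the affine solution set $S$. All of that is correct and matches the paper step for step.

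The gap is exactly where you located it, in part (4), but your proposed repair does not work. You want $x_s^\top M^\# b=0$ and suggest obtaining it from Lemma \ref{lem2.1}; however, that lemma requires the left factor to lie in $R(M^\top)$, and $R(M^\top)={\rm Ker}(M)^\perp=\{x_s\}^\perp$, so $x_s$ is never an admissible left factor (it would force $x_s^\top x_s=0$). Worse, since $M^\#$ maps $R(M)$ bijectively onto $R(M)$, the identity $x_s^\top M^\# b=0$ for all admissible $b$ is equivalent to $x_s\perp R(M)$, i.e.\ to $M^\top x_s=0$, i.e.\ to $x_s$ being proportional to the left null vector $y$. That holds when $\mathcal{A}_s$ (hence $M$) is symmetric --- the case that matters for Section 4 --- but not for a general weakly irreducible nonnegative $\mathcal{A}_s$. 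For what it is worth, the paper's own proof of (4) disposes of this in one line by asserting $x_s^\top M^\#=0$, which is subject to the same caveat; in the non-symmetric case the normalization that actually selects $M^\#(\mathcal{A}_d-\lambda_d\mathcal{I})x_s^{m-1}$ from $S$ is $y^\top x_d=0$ rather than $x_s^\top x_d=0$. So your instinct that this is the delicate step is sound; just be aware that Lemma \ref{lem2.1} cannot close it, and restrict the claim (or change the orthogonality condition) outside the symmetric setting.
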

\begin{proof}
Let $M$ be the matrix defined in Theorem 3.1. By Theorem 3.1, we know that $\mathcal{A}x^{m-1}=\lambda x^{[m-1]}$ if and only if $\mathcal{A}_sx_s^{m-1}=\lambda_sx_s^{[m-1]}$ and
\begin{equation}
Mx_d=(\mathcal{A}_d-\lambda_d\mathcal{I})x_s^{m-1}.\tag{3.1}
\end{equation}
Since $\mathcal{A}_s$ is a weakly irreducible nonnegative tensor, by Lemma \ref{lem2.4}, we can choose positive real number $\lambda_s$ and positive real vector $x_s$ such that $\mathcal{A}_sx_s^{m-1}=\lambda_sx_s^{[m-1]}$ and $\lambda_s$ is the spectral radius of $\mathcal{A}_s$. So we need to show that there are real number $\lambda_d$ and real vector $x_d$ satisfying (3.1).

Since $\mathcal{A}_s$ is a weakly irreducible nonnegative tensor and $x_s$ is positive, $M$ is an irreducible Z-matrix. By computation, we get $Mx_s=0$. From Lemmas \ref{lem2.2} and \ref{lem2.3}, we know that $M$ is an $n\times n$ singular $M$-matrix with ${\rm rank}(M)=n-1$. Then there exists an unique positive unit vector $y$ such that $y^\top M=0$. If $\lambda_d$ and $x_d$ satisfying (3.1), then $y^\top((\mathcal{A}_d-\lambda_d\mathcal{I})x_s^{m-1})=0$, that is, $(\mathcal{A}_d-\lambda_d\mathcal{I})x_s^{m-1}\in R(M)$. In this case, we get
\begin{eqnarray*}
\lambda_d=\frac{y^\top(\mathcal{A}_dx_s^{m-1})}{y^\top x_s^{[m-1]}}.
\end{eqnarray*}
Moreover, if $\mathcal{A}_s$ is symmetry, then $M$ is symmetry and $y=(x_s^\top x_s)^{-1}x_s$.

Notice that $Mx_s=0$ and ${\rm rank}(M)=n-1$. So we obtain
\begin{eqnarray*}
{\rm Ker}(M)=\{cx_s:c\in\mathbb{R}\}.
\end{eqnarray*}
When $\lambda_d=\frac{y^\top(\mathcal{A}_dx_s^{m-1})}{y^\top x_s^{[m-1]}}$, we have $(\mathcal{A}_d-\lambda_d\mathcal{I})x_s^{m-1}\in R(M)$ and
\begin{eqnarray*}
MM^{(1)}(\mathcal{A}_d-\lambda_d\mathcal{I})x_s^{m-1}=(\mathcal{A}_d-\lambda_d\mathcal{I})x_s^{m-1}.
\end{eqnarray*}
Then the general solution of linear equations $Mx_d=(\mathcal{A}_d-\lambda_d\mathcal{I})x_s^{m-1}$ is
\begin{eqnarray*}
x_d=M^{(1)}(\mathcal{A}_d-\lambda_d\mathcal{I})x_s^{m-1}+cx_s.
\end{eqnarray*}

From the above arguments, we know that $\mathcal{A}$ has a positive dual number eigenvalue $\lambda=\lambda_s+\lambda_d\epsilon$ with a positive dual number eigenvector $x=x_s+x_d\epsilon$, and parts (1),(2) and (3) hold.

By part (3), there is a constant $c$ such that
\begin{eqnarray*}
x_d=M^\#(\mathcal{A}_d-\lambda_d\mathcal{I})x_s^{m-1}+cx_s.
\end{eqnarray*}
Since $x_s^\top M^\#=0$, we have $x_d=M^\#(\mathcal{A}_d-\lambda_d\mathcal{I})x_s^{m-1}$ when $x_s^\top x_d=0$. So part (4) holds.
\end{proof}
Let $e_i$ denote the unit column vector whose $i$-th coordinate is $1$, and the other coordinates are zeros. We can obtain the following result from Theorem 3.2.
\begin{thm}
Let $\mathcal{A}=\mathcal{A}_s+\mathcal{A}_d\epsilon$ be a dual tensor of order $m$ and dimension $n$, where $\mathcal{A}_s$ is a weakly irreducible nonnegative tensor. Let $\lambda=\lambda_s+\lambda_d\epsilon$ be a positive dual number eigenvalue of $\mathcal{A}$ with a positive dual eigenvector $x=x_s+x_d\epsilon$. If $(x_s)_i=(x_s)_j$, then
\begin{eqnarray*}
(x_d)_i-(x_d)_j=(e_i^\top-e_j^\top)M^\#(\mathcal{A}_d-\lambda_d\mathcal{I})x_s^{m-1}=(e_i^\top-e_j^\top)M^{(1)}(\mathcal{A}_d-\lambda_d\mathcal{I})x_s^{m-1}
\end{eqnarray*}
for any $\{1\}$-inverse $M^{(1)}$ of $M$, where $M$ is the matrix defined in Theorem 3.1.
\end{thm}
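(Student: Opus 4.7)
The plan is to derive the theorem as a direct consequence of Theorem 3.2(3), with Lemma 2.1 clarifying why the expression is independent of the choice of $\{1\}$-inverse. First I would note that the group inverse $M^\#$ is available in this setting: Theorem 3.2(2) identifies $M$ as an irreducible singular $M$-matrix with $\operatorname{Ker}(M)=\{cx_s:c\in\mathbb{R}\}$, so $0$ is a simple eigenvalue of $M$ and $\operatorname{rank}(M)=\operatorname{rank}(M^2)$, whence $M^\#$ exists and is itself a $\{1\}$-inverse of $M$.

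Next, Theorem 3.2(3) asserts that for any $\{1\}$-inverse $M^{(1)}$ there is a scalar $c\in\mathbb{R}$ (depending on the choice of $M^{(1)}$) with
\[
x_d=M^{(1)}(\mathcal{A}_d-\lambda_d\mathcal{I})x_s^{m-1}+cx_s.
\]
Left-multiplying by $e_i^\top-e_j^\top$ yields
\[
(x_d)_i-(x_d)_j=(e_i^\top-e_j^\top)M^{(1)}(\mathcal{A}_d-\lambda_d\mathcal{I})x_s^{m-1}+c\bigl((x_s)_i-(x_s)_j\bigr),
\]
and the hypothesis $(x_s)_i=(x_s)_j$ annihilates the second summand. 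Taking $M^{(1)}=M^\#$ gives the first equality of the theorem, while letting $M^{(1)}$ be arbitrary gives the second.

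For a structural cross-check that makes the independence of the $\{1\}$-inverse explicit, I would apply Lemma 2.1 with $y=e_i-e_j$ and $x=(\mathcal{A}_d-\lambda_d\mathcal{I})x_s^{m-1}$. The vector $x$ lies in $R(M)$ by the range verification performed inside the proof of Theorem 3.2, where $\lambda_d$ was chosen precisely to force $(\mathcal{A}_d-\lambda_d\mathcal{I})x_s^{m-1}$ into $R(M)$. The vector $e_i-e_j$ lies in $R(M^\top)=\operatorname{Ker}(M)^\perp=\{v:v^\top x_s=0\}$ exactly because the hypothesis $(x_s)_i=(x_s)_j$ gives $(e_i-e_j)^\top x_s=0$. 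Lemma 2.1 then delivers the equality of the $M^\#$- and $M^{(1)}$-expressions.

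The main obstacle is bookkeeping rather than computation. The point worth flagging is that the single hypothesis $(x_s)_i=(x_s)_j$ does double duty: it kills the free parameter $c$ in the general form of $x_d$, and it simultaneously places $e_i-e_j$ inside $R(M^\top)$, which is exactly the condition that unlocks Lemma 2.1.
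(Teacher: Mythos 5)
Your proposal is correct and follows essentially the same route as the paper: both deduce the first equality from the general-solution form in Theorem 3.2(3) using $(e_i^\top-e_j^\top)x_s=0$ to kill the free parameter, and both obtain the independence from the choice of $\{1\}$-inverse via Lemma 2.1 with $e_i-e_j\in R(M^\top)$ and $(\mathcal{A}_d-\lambda_d\mathcal{I})x_s^{m-1}\in R(M)$. Your added observation that the single hypothesis $(x_s)_i=(x_s)_j$ serves both purposes is a nice clarification but not a different argument.
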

\begin{proof}
If $(x_s)_i=(x_s)_j$, then $(e_i^\top-e_j^\top)x_s=0$. From part (2) of Theorem 3.2, we know that $e_i-e_j\in R(M^\top)$. Theorem 3.1 implies that $(\mathcal{A}_d-\lambda_d\mathcal{I})x_s^{m-1}\in R(M)$. By Lemma \ref{lem2.1}, we obtain
\begin{eqnarray*}
(e_i^\top-e_j^\top)M^\#(\mathcal{A}_d-\lambda_d\mathcal{I})x_s^{m-1}=(e_i^\top-e_j^\top)M^{(1)}(\mathcal{A}_d-\lambda_d\mathcal{I})x_s^{m-1}
\end{eqnarray*}
for any $\{1\}$-inverse $M^{(1)}$ of $M$. By part (3) of Theorem 3.2, we have
\begin{eqnarray*}
(x_d)_i-(x_d)_j=(e_i^\top-e_j^\top)M^\#(\mathcal{A}_d-\lambda_d\mathcal{I})x_s^{m-1}.
\end{eqnarray*}
\end{proof}
\section{Dual eigenvalues and eigenvectors of hypergraphs}
Let $H$ be an $n$-vertex $m$-uniform connected hypergraph. The \textit{adjacency tensor} of $H$, denoted by $\mathcal{A}_H$, is an order $m$ dimension $|V(H)|$ tensor with entries
\begin{eqnarray*}
a_{i_1i_2\cdots i_m}=\begin{cases}\frac{1}{(m-1)!}~~~~~~~\mbox{if}~i_1i_2\cdots i_m\in E(H),\\
0~~~~~~~~~~~~~\mbox{otherwise}.\end{cases}
\end{eqnarray*}
If $H$ is connected, then $\mathcal{A}_H$ is a weakly irreducible nonnegative tensor. In this case, $\mathcal{A}_H$ has a positive real eigenvalue $\lambda_s$ (the spectral radius of $H$) with a positive real eigenvector $x_s$.

For a vertex subset $U$, let $x_s^U=\prod_{i\in U}(x_s)_i$. Let $M(H,x_s)$ denote the $n\times n$ symmetry matrix with entries
\begin{eqnarray*}
(M(H,x_s))_{uv}=\begin{cases}(m-1)\lambda_s(x_s)_u^{m-2}~~~~~~~~~~~~\mbox{if}~u=v,\\
\sum_{\{u,v\}\subseteq e\in E(H)}x_s^{e\backslash\{u,v\}}~~~~~~~\mbox{if}~u\sim v,\\
0~~~~~~~~~~~~~~~~~~~~~~~~~~~~~~~~~\mbox{otherwise},\end{cases}
\end{eqnarray*}
where $u\sim v$ means there exists an edge $e\in E(H)$ containing two distinct vertices $u$ and $v$.

We can derive the following result from Theorem 3.2.
\begin{thm}
Let $H$ be an $m$-uniform connected hypergraph with an associated dual tensor $\mathcal{A}=\mathcal{A}_s+\mathcal{A}_d\epsilon$, where $\mathcal{A}_s$ is the adjacency tensor of $H$. Then $\mathcal{A}$ has a positive dual number eigenvalue $\lambda=\lambda_s+\lambda_d\epsilon$ with a positive dual eigenvector $x=x_s+x_d\epsilon$. Furthermore, $\lambda_s$, $\lambda_d$, $x_s$ and $x_d$ have the following properties.\\
(1) $\lambda_s$ is the spectral radius of $\mathcal{A}_s$ and $\mathcal{A}_sx_s^{m-1}=\lambda_sx_s^{[m-1]}$, $\lambda_d=\frac{x_s^\top(\mathcal{A}_dx_s^{m-1})}{\|x_s\|_m^m}$.\\
(2) $M(H,x_s)$ is an irreducible singular $M$-matrix such that
\begin{eqnarray*}
{\rm Ker}(M(H,x_s))=\{cx_s:c\in\mathbb{R}\}.
\end{eqnarray*}
(3) Let $M^{(1)}$ be a $\{1\}$-inverse of $M(H,x_s)$. Then
\begin{eqnarray*}
x_d\in S=\{M^{(1)}(\mathcal{A}_d-\lambda_d\mathcal{I})x_s^{m-1}+cx_s:c\in\mathbb{R}\}
\end{eqnarray*}
and for any $z\in S$, $x_s+z\epsilon$ is an eigenvector of the eigenvalue $\lambda$ of $\mathcal{A}$.\\
(4) If $x_s^\top x_d=0$, then $x_d=M(H,x_s)^\#(\mathcal{A}_d-\lambda_d\mathcal{I})x_s^{m-1}$.
\end{thm}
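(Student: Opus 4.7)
The plan is to recognize Theorem 4.1 as a direct specialization of Theorem 3.2 to the case in which $\mathcal{A}_s$ is the adjacency tensor of a connected $m$-uniform hypergraph. First I would check that the hypothesis of Theorem 3.2 is met: the adjacency tensor is manifestly nonnegative, and since $H$ is connected, the digraph $\Gamma_{\mathcal{A}_s}$ contains, for each edge $e=\{i_1,\ldots,i_m\}\in E(H)$, all arcs $(i_j,i_k)$ with $j\neq k$, so connectivity of $H$ forces strong connectivity of $\Gamma_{\mathcal{A}_s}$ and hence weak irreducibility of $\mathcal{A}_s$.

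The central bookkeeping is to show that $M(H,x_s)$ coincides (up to the sign convention that makes off-diagonal entries nonpositive) with the matrix $M$ built from $\mathcal{A}_s$ in Theorem 3.1. Because the adjacency tensor is symmetric, the simplified formula $M=(m-1)(\lambda_s(\mathrm{diag}((x_s)_1,\ldots,(x_s)_n))^{m-2}-A)$ applies, so I only need to evaluate the symmetric matrix $A$ with $(A)_{ij}=\sum_{i_3,\ldots,i_m=1}^{n}(\mathcal{A}_s)_{iji_3\cdots i_m}(x_s)_{i_3}\cdots(x_s)_{i_m}$ for $i\neq j$. For each edge $e\in E(H)$ containing both $i$ and $j$, the remaining $m-2$ vertices of $e$ can be arranged in positions $i_3,\ldots,i_m$ in $(m-2)!$ ways, each contributing $\tfrac{1}{(m-1)!}x_s^{e\setminus\{i,j\}}$; summing gives $(A)_{ij}=\tfrac{1}{m-1}\sum_{\{i,j\}\subseteq e\in E(H)}x_s^{e\setminus\{i,j\}}$, and the diagonal factor $(m-1)\lambda_s(x_s)_u^{m-2}$ matches $M(H,x_s)_{uu}$ on the nose. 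Thus $M(H,x_s)$ is the matrix $M$ of Theorem 3.1 (modulo the sign on off-diagonals).

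With this identification in hand, parts (2), (3) and (4) of Theorem 4.1 transcribe immediately from the corresponding parts of Theorem 3.2. For part (1), the first two equalities are restatements of Theorem 3.2(1), and for the closed-form of $\lambda_d$ I would invoke the symmetry clause of Theorem 3.2(2): the positive unit left null vector is $y=(x_s^\top x_s)^{-1}x_s$, whence
\begin{equation*}
\lambda_d=\frac{y^\top(\mathcal{A}_d x_s^{m-1})}{y^\top x_s^{[m-1]}}=\frac{x_s^\top(\mathcal{A}_d x_s^{m-1})}{x_s^\top x_s^{[m-1]}}=\frac{x_s^\top(\mathcal{A}_d x_s^{m-1})}{\sum_{i=1}^{n}(x_s)_i^{m}}=\frac{x_s^\top(\mathcal{A}_d x_s^{m-1})}{\|x_s\|_m^m}.
\end{equation*}

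No step is genuinely hard; the only nontrivial point is the combinatorial identification of $M(H,x_s)$ with the tensor-theoretic matrix $M$ of Theorem 3.1, which hinges on the interplay between the $(m-1)!$ normalisation in the adjacency tensor and the $(m-2)!$ orderings of the free indices. Once this accounting is done, the hypergraph statement is essentially a translation of Theorem 3.2 into hypergraph language.
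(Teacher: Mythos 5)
Your proposal is correct and follows exactly the route the paper intends: Theorem 4.1 is stated there without proof, merely as a specialization of Theorem 3.2, and you supply precisely the missing details (weak irreducibility of the adjacency tensor of a connected hypergraph, the $(m-2)!/(m-1)!$ accounting identifying $M(H,x_s)$ with the matrix $M$ of Theorem 3.1, and the symmetry clause giving $y=(x_s^\top x_s)^{-1}x_s$ and hence the formula for $\lambda_d$). Your parenthetical about the sign convention is also well taken: as printed, the off-diagonal entries of $M(H,x_s)$ should carry a minus sign for it to be a $Z$-matrix with $M(H,x_s)x_s=0$, so you have in fact caught a typo in the paper's definition.
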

Let $x=x_s+x_d\epsilon$ be the vector in Theorem 4.1. If $x_s^\top x_d=0$ and $\|x_s\|_m=1$, then $x_s$ and $x_d$ are unique. In this case, the unique eigenvector $x$ is defined as the \textit{centrality vector} of the uniform hypergraph $H$. We can obtain the following result from Theorem 3.3.
\begin{thm}
Let $x=x_s+x_d\epsilon$ be the centrality vector of an $m$-uniform connected hypergraph $H$. If $(x_s)_i=(x_s)_j$ for two vertices $i$ and $j$, then
\begin{eqnarray*}
(x_d)_i-(x_d)_j&=&(e_i^\top-e_j^\top)M(H,x_s)^\#(\mathcal{A}_d-\lambda_d\mathcal{I})x_s^{m-1}\\
&=&(e_i^\top-e_j^\top)M^{(1)}(\mathcal{A}_d-\lambda_d\mathcal{I})x_s^{m-1}
\end{eqnarray*}
for any $\{1\}$-inverse $M^{(1)}$ of $M(H,x_s)$.
\end{thm}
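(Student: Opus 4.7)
The plan is to deduce Theorem 4.2 as a direct specialization of Theorem 3.3 to the case where the standard part of the dual tensor is the adjacency tensor $\mathcal{A}_H$ of a connected $m$-uniform hypergraph. Since $\mathcal{A}_H$ is symmetric, nonnegative, and weakly irreducible, the symmetric case of Theorem 3.1 applies, and the general-purpose matrix $M = (m-1)\bigl(\lambda_s\,\mathrm{diag}((x_s)_1,\ldots,(x_s)_n)^{m-2} - A\bigr)$ is available. The only real work is to verify that this $M$ coincides with the hypergraph-specific matrix $M(H,x_s)$ introduced in this section; once this identification is made, Theorem 3.3 applies verbatim.

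For the identification, I would compute the entries of $A$ directly from the entries of $\mathcal{A}_H$. For $u \ne v$, each edge $e \supseteq \{u,v\}$ contributes $(m-2)!$ ordered tuples $(i_3,\ldots,i_m)$ with $\{u,v,i_3,\ldots,i_m\} = e$, each carrying weight $1/(m-1)!$, so $(A)_{uv} = \frac{1}{m-1}\sum_{\{u,v\}\subseteq e \in E(H)} x_s^{e\setminus\{u,v\}}$ when $u \sim v$ and $0$ otherwise. The diagonal entries $(A)_{uu}$ vanish because edges consist of distinct vertices. Multiplying by the prefactor $m-1$ and combining with the diagonal term $(m-1)\lambda_s (x_s)_u^{m-2}$ matches the definition of $M(H,x_s)$ up to the standard sign convention for the off-diagonal entries of an $M$-matrix.

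With $M = M(H,x_s)$, Theorem 3.3 delivers both the $M^\#$ and the $M^{(1)}$ formulas for $(x_d)_i - (x_d)_j$ under the hypothesis $(x_s)_i = (x_s)_j$. The only point that requires a brief remark is that Theorem 4.2 is stated for the centrality vector, fixed by the normalizations $x_s^\top x_d = 0$ and $\|x_s\|_m = 1$, whereas Theorem 3.3 is stated for an arbitrary positive dual eigenvector. This is harmless: by Theorem 3.2(3), any two admissible choices of $x_d$ differ by a scalar multiple of $x_s$, and such a term cancels in $(x_d)_i - (x_d)_j$ precisely because $(x_s)_i = (x_s)_j$. Hence the conclusion is independent of the normalization, and the main obstacle is merely keeping the combinatorial bookkeeping straight while matching the two matrices entry by entry.
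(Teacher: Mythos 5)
Your proposal is correct and follows exactly the route the paper intends: the paper gives no separate proof of this theorem, stating only that it follows from Theorem 3.3, and your specialization of the general matrix $M$ to $M(H,x_s)$ (including the $(m-2)!/(m-1)!$ bookkeeping and the observation that the $cx_s$ ambiguity in $x_d$ cancels because $(x_s)_i=(x_s)_j$) is precisely the verification that is left implicit. Your aside about the sign of the off-diagonal entries is well taken, since as printed the definition of $M(H,x_s)$ is missing a minus sign there, which is needed for it to be the $Z$-matrix of Theorem 3.1 and the $M$-matrix claimed in Theorem 4.1(2).
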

By Theorem 4.1, the centrality vector of an uniform hypergraph $H$ can be obtained via the following steps.

Step 1. Compute the spectral radius $\lambda_s$ and Perron vector $x_s$ of $H$ ($\|x_s\|_m=1$).

Step 2. Use $x_s$ to compute $\lambda_d=x_s^\top(\mathcal{A}_dx_s^{m-1})$ and $M(H,x_s)$.

Step 3. Set $x_d=M(H,x_s)^\#(\mathcal{A}_d-\lambda_d\mathcal{I})x_s^{m-1}$.

Step 4. Output $x=x_s+x_d\epsilon$.

\section{Dual Centrality of vertices in graphs and hypergraphs}

By Theorem 4.1, we have for an $m$-uniform connected hypergraph ($m \geq 2$), when we add a perturb $\mathcal{A}_d$ to the adjacency tensor $\mathcal{A}_s$, the eigenvector centrality scores $x_s$ remain unchanged. (When $m = 2$, the above scenario reduces to an ordinary graph.)
We therefore ask whether comparing $x_d$ can distinguish vertices whose eigenvector centrality scores are equal. In practice, perturbations can be applied flexibly to hyperedges of special structures, such as triangles, $4$-cycles, stars, or even cut-edges .etc.

\begin{figure}[H]
\centerline{\includegraphics[scale=0.35]{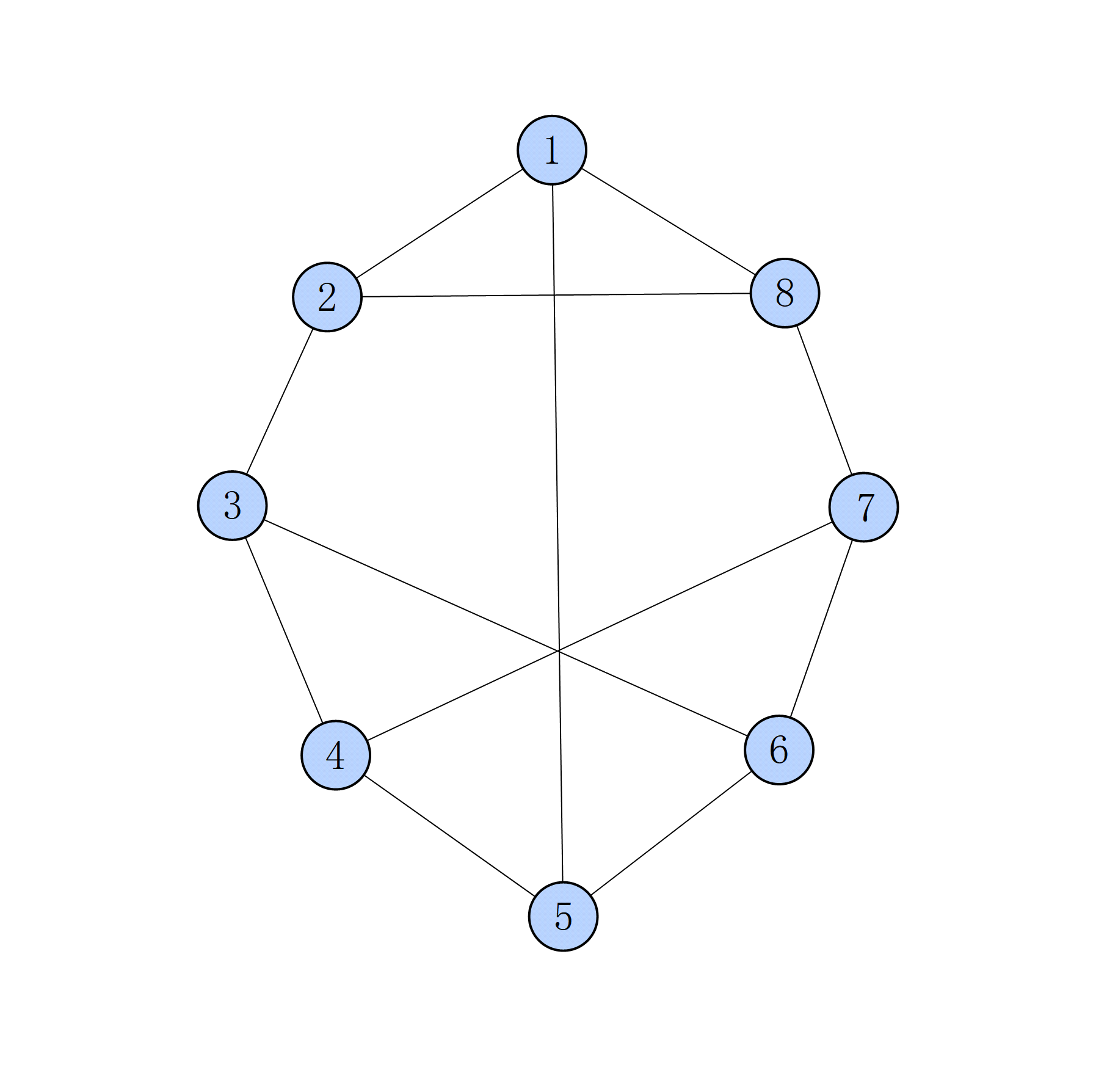}}
\caption{A regular garph with $8$ vertices.}
\label{fig1}
\end{figure}

Below we give an example to illustrate this idea.
The graph in Fig \ref{fig1} shows a regular graph with $8$ vertices, whose adjacency matrix is denoted by $A_s$.
The eigenvector centrality of every vertex is given by the perron vector $x_s$ of $A_s$,
and all entries of $x_s$ are equal.
We now introduce a dual perturbation on the single triangle of the graph, we set the entries $(1, 2),(2,1) (2, 8),(8,2),(1,8)$ and $(8, 1)$ of $A_d$ to $1$ and all other entries to $0$.
We wish to determine the ranking of the vertices with respect to the dual component vector $x_d$ of the dual matrix $A = A_s + A_d \epsilon$. The resulting vector is listed in the table below.

\begin{table*}[h]
    \tiny
    \centering
    \caption{The centrality scores of vertices in Fig \ref{fig1}.}
    \label{table1}
    \begin{tabular}{lcccccccc}  
    \toprule
    \textbf{Vertices} & \textbf{$1$} & \textbf{$2$} & \textbf{$3$} & \textbf{$4$} & \textbf{$5$} & \textbf{$6$}& \textbf{$7$} &\textbf{$8$}\\ \midrule
    \textbf{$\boldsymbol{x_s}$}  & 0.3536 & 0.3536 & 0.3536 & 0.3536 &0.3536 & 0.3536 & 0.3536  &0.3536  \\
    \textbf{$\boldsymbol{x_d}$} & \textbf{0.2983} & \textbf{0.2983} & -0.1436 & -0.2320 & -0.1436 & -0.2320 & -0.1436  & \textbf{0.2983} \\
    \bottomrule
    \end{tabular}
\end{table*}

From Table \ref{table1}, we have the ranking of vertices under $x_d$ is $1 = 2 = 8 > 3 =5 =7 > 4 = 6$.
Thus, once the triangle is dually perturbed the vertices are no longer tied in the $x_d$ ranking.
The three vertices that form the triangle occupy the top positions, followed by their immediate neighbours, while the vertices not adjacent to the triangle are ranked lowest.
%
%
\begin{figure}[H]
\centerline{\includegraphics[scale=0.35]{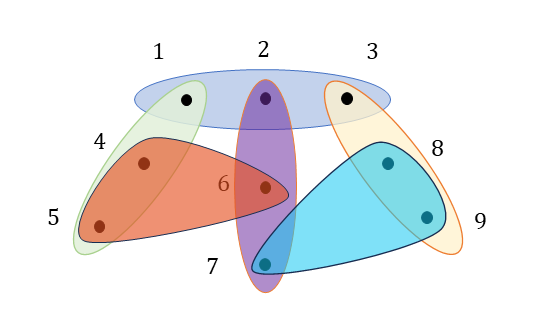}}
\caption{A regular hypergarph  with $9$ vertices.}
\label{fig2}
\end{figure}

The Figure \ref{fig2} shows a hypergraph with $9$ vertices, whose adjacency tensor is denoted by $\mathcal{A}_s$.
The eigenvector centrality of every vertex is given by the perron vector $x_s$ of $\mathcal{A}_s$,
and some entries of $x_s$ are equal.
We now introduce a dual perturbation on $\{1, 2, 3\}$, we set the entries $(1,2,3),(1,3,2),(2,1,3),(2,3,1),(3,1,2)$ and $(3,2,1)$ of $\mathcal{A}_d$ to $1$ and all other entries to $0$.
We wish to determine the ranking of the vertices with respect to the dual component vector $x_d$ of the dual tensor $\mathcal{A}= \mathcal{A}_s + \mathcal{A}_d \epsilon$. The resulting vector is listed in the table below.

\begin{table*}[h]
    \tiny
    \centering
    \caption{The centrality scores of vertices in Fig \ref{fig2}.}
    \label{table3}
    \begin{tabular}{lccccccccc}  
    \toprule
    \textbf{Vertices} & \textbf{$1$} & \textbf{$2$} & \textbf{$3$} & \textbf{$4$} & \textbf{$5$} & \textbf{$6$}& \textbf{$7$}& \textbf{$8$}& \textbf{$9$} \\ \midrule
    \textbf{$\boldsymbol{x_s}$}  & 0.4807 & 0.4807 & 0.4807 & 0.4807 &0.4807 & 0.4807 & 0.4807 & 0.4807 & 0.4807 \\
    \textbf{$\boldsymbol{x_d}$} & \textbf{0.2137} & \textbf{0.2137} &\textbf{0.2137}  & -0.1068 & -0.1068 & -0.1068 & -0.1068 &-0.1068 & -0.1068\\
    \bottomrule
    \end{tabular}
\end{table*}

As shown in Table \ref{table3}, the ranking of vertices under $x_s$ is $1 =2 = \cdots = 9 $.
But under $x_d$ , we observe that the ranking changes to $1 =2=3 >4=5=6=7=8=9$.
We conclude that when a hyperedge $e_1$ is perturbed, and vertices $i \in e_1$, $j \notin e_1$ with $(x_s)_i=(x_s)_j$, then $(x_d)_i>(x_d)_j$.
We would like to know whether the resulting ranking remains the same when different hyperedges are perturbed.
We set the entries $(4,5,6),(4,6,5),(5,4,6),(5,6,4),(6,4,5)$ and $(6,5,4)$ of $\mathcal{A}_d$ to $1$ and all other entries to $0$.The resulting vector is listed in the table below.

\begin{table*}[h]
    \tiny
    \centering
    \caption{The centrality scores of vertices in Fig \ref{fig2}.}
    \label{table4}
    \begin{tabular}{lccccccccc}  
    \toprule
    \textbf{Vertices} & \textbf{$1$} & \textbf{$2$} & \textbf{$3$} & \textbf{$4$} & \textbf{$5$} & \textbf{$6$}& \textbf{$7$}& \textbf{$8$}& \textbf{$9$} \\ \midrule
    \textbf{$\boldsymbol{x_s}$}  & 0.4807 & 0.4807 & 0.4807 & 0.4807 &0.4807 & 0.4807 & 0.4807 & 0.4807 & 0.4807 \\
    \textbf{$\boldsymbol{x_d}$} & 0.0855 & -0.1068 &-0.2991 & \textbf{0.5342} & \textbf{0.5342} & 0.3419 & -0.2350 &-0.4273 & -0.4273\\
    \bottomrule
    \end{tabular}
\end{table*}

From Table \ref{table4}, we have the ranking of vertices under $x_d$ is $4=5>6>1>2>7>3>8=9$.
Vertices $4$ and $5$ are ranked higher than vertex $6$, because in addition to belonging to hyperedge $\{4, 5, 6\}$, they also appear in hyperedge $\{1, 4, 5\}$.
Vertices $1$, $2$, and $7$ are adjacent to all of the important vertices ($4$, $5$, and $6$). In contrast, vertices $3$, $8$, and $9$ are only connected to peripheral vertices. Therefore, vertices $1$, $2$, and $7$ ranked higher.

\section*{References}
\bibliographystyle{plain}
\bibliography{ml0ht2}
\end{spacing}
\end{document}